\newtheorem{theorem}{Theorem}[section]
\theoremstyle{remark}
\newtheorem{remark}{Remark}[section]
\newtheorem{example}{Example}[section]
\newcommand{\trig}[1]{{#1}^T}
\newcommand{\hypb}[1]{{#1}^H}
\newcommand{\gene}[1]{{#1}^X}
\newcommand{\ii}{{\rm i}}
\newcommand{\qbinom}{\genfrac{[}{]}{0pt}{}}
\newcommand{\thealgorithm}{\arabic\algocf@float}
\newcommand{\AlgoCaptionFormat}{}
\renewcommand{\algocf@makecaption@ruled}[2]{%
  \global\sbox\algocf@capbox{\hskip\AlCapHSkip%
    \setlength{\hsize}{\columnwidth}
    \addtolength{\hsize}{-2\AlCapHSkip}
    \vtop{\AlgoCaptionFormat\algocf@captiontext{#1}{#2}}}
}%
\begin{document}

\title{On the normalization of trigonometric and hyperbolic B-splines}

\author{Hendrik Speleers}

\date{\small Department of Mathematics, University of Rome Tor Vergata, Rome, Italy \\
E-mail: \texttt{speleers@mat.uniroma2.it}
}

\maketitle

\begin{abstract}
Trigonometric and hyperbolic B-splines can be computed via recurrence relations analogous to the classical polynomial B-splines. However, in their original formulation, these two types of B-splines do not form a partition of unity and consequently do not admit the notion of control polygons with the convex hull property for design purposes. In this paper, we look into explicit expressions for their normalization and provide a recursive algorithm to compute the corresponding normalization weights.
As example application, we consider the exact representation of a circle in terms of $C^{2n-1}$ trigonometric B-splines of order $m=2n+1\geq3$, with a variable number of control points. We also illustrate the approximation power of trigonometric and hyperbolic splines.
\end{abstract}


\section{Introduction}

Smooth polynomial splines are a practical and widely used tool in many applications. This is mainly thanks to their representation in terms of a B-spline basis, which can be computed efficiently and accurately by means of a stable recurrence relation \cite{Boor:2001}. These basis functions are nonnegative, have local support, and can be normalized to form a partition of unity.
The degrees of freedom in such representations can be geometrically interpreted as control points, leading to the notion of control polygons. The (local) convex hull property makes them very useful and intuitive for computer-aided geometric design of parametric curves \cite{CohenRE:2001,PieglT:2012}.
For the description of conic sections, one often relies on their rational form, called NURBS, which are essentially B-splines in homogeneous coordinates. However, rational curves may be unstable and their derivatives are difficult to compute \cite{MainarPS:2001}.

B-splines (and NURBS) are also the driving force behind isogeometric analysis, a methodology for the analysis of problems governed by partial
differential equations \cite{CottrellHB:2009}. Isogeometric analysis aims to simplify the interoperability between geometric modeling and numerical simulation by constructing a fully integrated framework for computer-aided design and finite element analysis, based on spline representations. The isogeometric paradigm shows important advantages over classical finite element analysis. Firstly, complicated geometries can be represented more accurately and some common profiles in engineering, such as conic sections, are described exactly. Secondly, the inherently high smoothness of B-splines leads to a better accuracy per degree of freedom and improved spectral properties.
It has been shown that maximally smooth splines on uniform partitions are close to optimal (in the sense of Kolmogorov $n$-widths) from an approximation point of view \cite{SandeMS:2019}.

Besides the well-known (polynomial) splines mentioned above, there is a large zoo of generalized splines, including Tchebycheffian splines, L-splines, and many others; see \cite{Schumaker:2007} and references therein. B-spline-like bases have been constructed for a variety of these generalized spline spaces. Yet, two classes of trigonometric and hyperbolic splines are particularly closely linked to polynomial splines \cite{Schumaker:1982} because there exist analogous recurrence relations for the  computation of their bases \cite{LycheW:1979,Schumaker:1983}. The corresponding B-splines, unfortunately, do not form a partition of unity and consequently no convex hull property is available. As a remedy, other design mechanisms have been developed for them. In \cite{KochLNS:1995}, a trigonometric version of the convex hull property has been obtained by introducing control curves.
These classes of splines are well suited for building high-quality parametric descriptions of conic sections and cam profiles \cite{NeamtuPS:1998}. The availability of trigonometric and exponential functions in the space is also beneficial in the context of isogeometric analysis \cite{RavalMS:2023}.

Since the use of control polygons and their convex hull property are so deeply rooted in classical computer-aided geometric design, it is natural to ask whether they can also be established for trigonometric and hyperbolic splines. This brings us to the problem of rescaling the corresponding B-spline bases such that they form a partition of unity, preferably with explicit and simple expressions. By definition of the spaces, this is only possible for such splines of odd order. Indeed, spaces of even order do not contain constants. In the original papers \cite{LycheW:1979,Schumaker:1983}, Marsden-like identities have been provided for the representation of trigonometric and hyperbolic power functions in terms of the corresponding B-splines, with the representation of constants as a special case. However, those identities were not fully explicit. Later, in \cite{LycheSS:1998}, an explicit formulation has been derived for trigonometric splines using trigonometric polar forms. A similar strategy can also be applied to hyperbolic splines.

The adoption of normalized trigonometric B-splines (of odd order) has been recognized in \cite{NeamtuPS:1998} and the associated normalization weights were explicitized for a general (nonuniform) knot partition.
An alternative explicit expression for normalized trigonometric B-splines on a uniform knot partition can be found in \cite{Walz:1997}.
The particular trigonometric cases of order $3$ and $5$ with uniform knots have recently been studied 
in \cite{AlbrechtMPR:2023a,AlbrechtMPR:2023b}.
The simplified setting, where the partition only consists of a single interval, has also received considerable attention; see, e.g., \cite{Han:2015,SanchezReyes:1998} for the trigonometric case and \cite{ShenW:2005} for the hyperbolic case.
The normalization makes that the classical convex hull property is regained and can be exploited for design and analysis.

Unfortunately, even though they are elegant, the explicit formulas for the normalization weights of the considered B-splines are computationally expensive: each of them requires the calculation of a sum, see \eqref{eq:B-spline-trig-weights-full} and \eqref{eq:B-spline-hypb-weights-full}, whose number of terms $|\mathcal{Q}_n|$ has a strong factorial growth in the spline order $m=2n+1$ (the growth of $|\mathcal{Q}_n|$ is depicted in Figure~\ref{fig:card-set}). Therefore, they are less appealing in a practical implementation and especially in the context of isogeometric analysis, where the use of high-order splines is common.

Recently, in the more general context of multi-degree Tchebycheffian B-splines (MDTB-splines), a Matlab toolbox for the computation of normalized B-splines has been developed in \cite{Speleers:2022}, with specialized routines for the trigonometric and hyperbolic cases. These routines do not utilize the existing recurrence relations for B-splines but are based on a so-called spline extraction operator \cite{HiemstraHMST:2020}. A similar extraction operator has also been exploited in \cite{SpeleersT:2021} for obtaining smooth piecewise NURBS descriptions of circles and ellipses.

In this paper, we continue the investigation into the construction of normalized trigonometric and hyperbolic B-splines, with a particular focus on the practical computation of normalization weights for high orders.
More precisely, we derive new explicit expressions for such weights and provide a practical method to compute them. In Section~\ref{sec:definition}, we start by recalling the recurrence relations for trigonometric/hyperbolic B-splines and the known explicit expressions for their normalization weights. Then, in Section~\ref{sec:weights}, we state our main results: the new expressions of the weights (see Theorem~\ref{thm:weights}) and how to compute them recursively (see Algorithm~\ref{alg:weights}). We also provide a simplified version in the case of uniform knots (see Theorem~\ref{thm:weights-uniform}).
In Section~\ref{sec:examples}, we apply the new expressions in the construction of normalized trigonometric/hyperbolic B-splines and consider the exact representation of a circle in terms of $C^{2n-1}$ trigonometric B-splines of order $m=2n+1\geq3$, with a variable number of control points.
We also illustrate the approximation power of trigonometric and hyperbolic splines.
In Section~\ref{sec:conclusion}, we conclude with a summary of our results.

\section{Definition of normalized B-splines}\label{sec:definition}

In this section, we recall from the literature the recurrence relations for trigonometric B-splines \cite{LycheW:1979} and hyperbolic B-splines \cite{Schumaker:1983}. In their standard form, these B-splines do not form a partition of unity, but it is possible to normalize them (for odd order).

Given a positive integer $m$, suppose that $\{x_j\}$ is a nondecreasing sequence of real numbers, called knots, satisfying $x_{j+m}-x_j<\pi$ for all $j$.
Trigonometric B-splines of order $m$ are usually defined by means of a stable recurrence relation. Following the construction in \cite{LycheW:1979}, they can be computed as
$$
T_{j,1}(x)=\begin{cases}
1\Big/\sin\left(\dfrac{x_{j+1}-x_j}{2}\right) & \text{if } x_j\leq x< x_{j+1},\\
0 & \text{otherwise},
\end{cases}
$$
and for $m>1$,
\begin{equation}\label{eq:recursion-trig}
T_{j,m}(x)=\dfrac{\sin\left(\dfrac{x-x_j}{2}\right)}{\sin\left(\dfrac{x_{j+m}-x_j}{2}\right)}T_{j,m-1}(x)
+\dfrac{\sin\left(\dfrac{x_{j+m}-x}{2}\right)}{\sin\left(\dfrac{x_{j+m}-x_j}{2}\right)}T_{j+1,m-1}(x)
\end{equation}
if $x_{j+m}>x_j$, and $T_{j,m}(x)=0$ if $x_{j+m}=x_j$.

Let us fix $m=2n+1$ ($n\geq0$ integer) and let $\mathcal{Q}_n$ be the set of vectors $\boldsymbol{q}=(q_1,\ldots,q_{2n})\in\mathbb{Z}^{2n}$ that consists of all permutations of
$$
(1,2,\ldots,2n).
$$
It is clear that the cardinality of this set equals
$$
|\mathcal{Q}_n|=(2n)!.
$$
Then, we rescale the trigonometric B-splines of odd order $m$ as
\begin{equation}\label{eq:B-spline-trig-def}
\trig{N}_{j,m}(x)=\trig{w}_{j,m}\sin\left(\dfrac{x_{j+m}-x_j}{2}\right) T_{j,m}(x),
\end{equation}
where
$$
\trig{w}_{j,1}=1,
$$
and for $m=2n+1\geq3$,
\begin{equation}\label{eq:B-spline-trig-weights-full}
\trig{w}_{j,m} = \dfrac{1}{|\mathcal{Q}_n|}
\sum_{\boldsymbol{q}\in \mathcal{Q}_n}\prod_{k=1}^{n}\cos\left(\frac{x_{j+q_{2k}}-x_{j+q_{2k-1}}}{2}\right).
\end{equation}

It has been shown in \cite{LycheW:1979} that the trigonometric B-splines $T_{j,m}$ are nonnegative and have local support, i.e.,
$$
T_{j,m}(x)>0, \quad x\in(x_j,x_{j+m}),
$$
and
$$
T_{j,m}(x)=0, \quad x\not\in[x_j,x_{j+m}).
$$
Since $x_{j+m}-x_j<\pi$, it is easy to check that these properties carry over to the normalized versions $\trig{N}_{j,m}$.
As observed in \cite{NeamtuPS:1998}, the functions in \eqref{eq:B-spline-trig-def} form a (local) partition of unity. More precisely, assuming $m\leq l\leq r$ and $x_l<x_{r+1}$, we have
$$
\sum_{j=l+1-m}^{r}\trig{N}_{j,m}(x)=1, \quad x_l\leq x<x_{r+1}.
$$
This follows directly from the results in \cite[Section~5]{LycheSS:1998} based on trigonometric polar forms. Furthermore, it has been shown in \cite{LycheW:1979} that the trigonometric B-splines of odd order $m=2n+1$ are splines with pieces belonging to the space
\begin{equation}\label{eq:local-space-trig}
\left\langle 1, \cos(x), \sin(x), \cos(2x), \sin(2x), \ldots, \cos(nx), \sin(nx) \right\rangle.
\end{equation}
At a given knot, they have smoothness $C^{2n-\mu}$, where $\mu$ is the multiplicity of the knot in the considered knot sequence.
Trigonometric splines were introduced in \cite{Schoenberg:1964}.

\begin{remark}\label{rmk:even-trig}
In this paper, with the purpose of normalization, we are only interested in the trigonometric B-splines $T_{j,2n+1}$ of odd order (see \eqref{eq:B-spline-trig-def}), which belong piecewisely to the space \eqref{eq:local-space-trig}.
When applying the recursion \eqref{eq:recursion-trig}, we also obtain the (intermediate) B-splines $T_{j,2n}$ of even order, which belong piecewisely to the space
$$
\left\langle \cos(\tfrac{1}{2}x), \sin(\tfrac{1}{2}x), \cos(\tfrac{3}{2}x), \sin(\tfrac{3}{2}x), \ldots, \cos((n-\tfrac{1}{2})x), \sin((n-\tfrac{1}{2})x) \right\rangle.
$$
However, such a space does not contain constants and no rescaling to form a partition of unity is possible for even order.
\end{remark}

\begin{remark}\label{rmk:knots-trig}
In the context of non-piecewise functions invariant under translations, the existence of bases with shape-preserving properties depends on the length of the interval domain, which must not exceed a certain ``critical length for design'' to ensure design-relevant properties; see, e.g., \cite{BeccariGM:2020,CarnicerMP:2003,Mazure:2005,Speleers:2022} and references therein. A similar restriction appears in the case of trigonometric B-splines, where the distance between knots must remain below a specific threshold.
The knot distance requirement $x_{j+m}-x_j<\pi$ considered here is stricter than in the paper \cite{LycheW:1979}. This ensures nonnegativity of the normalized function $\trig{N}_{j,m}$. It should be mentioned, however, that the requirement is only a sufficient condition but not a necessary condition for nonnegativity, as illustrated in Example~\ref{ex:circle}.
Actually, the weight $\trig{w}_{j,m}$ in \eqref{eq:B-spline-trig-weights-full} is positive if $x_{j+2n}-x_{j+1}<\pi$.
Without the normalization, the requirement for $T_{j,m}$ can be relaxed to $x_{j+m}-x_j<2\pi$.
\end{remark}

Now, we consider the hyperbolic case.
Suppose that $\{x_j\}$ is a nondecreasing sequence of real numbers.
From \cite{Schumaker:1983} we know that hyperbolic B-splines of order $m$ can be computed as
$$
H_{j,1}(x)=\begin{cases}
1\Big/\sinh\left(\dfrac{x_{j+1}-x_j}{2}\right) & \text{if } x_j\leq x< x_{j+1},\\
0 & \text{otherwise},
\end{cases}
$$
and for $m>1$,
\begin{equation}\label{eq:recursion-hypb}
H_{j,m}(x)=\dfrac{\sinh\left(\dfrac{x-x_j}{2}\right)}{\sinh\left(\dfrac{x_{j+m}-x_j}{2}\right)}H_{j,m-1}(x) +\dfrac{\sinh\left(\dfrac{x_{j+m}-x}{2}\right)}{\sinh\left(\dfrac{x_{j+m}-x_j}{2}\right)}H_{j+1,m-1}(x)
\end{equation}
if $x_{j+m}>x_j$, and $H_{j,m}(x)=0$ if $x_{j+m}=x_j$.

Fix again $m=2n+1$ ($n\geq0$ integer). A partition of unity is achieved by rescaling the hyperbolic B-splines as
\begin{equation}\label{eq:B-spline-hypb-def}
\hypb{N}_{j,m}(x)=\hypb{w}_{j,m}\sinh\left(\dfrac{x_{j+m}-x_j}{2}\right) H_{j,m}(x),
\end{equation}
where
$$
\hypb{w}_{j,1}=1,
$$
and for $m=2n+1\geq3$,
\begin{equation}\label{eq:B-spline-hypb-weights-full}
\hypb{w}_{j,m} = \dfrac{1}{|\mathcal{Q}_n|}
\sum_{\boldsymbol{q}\in \mathcal{Q}_n}\prod_{k=1}^{n}\cosh\left(\frac{x_{j+q_{2k}}-x_{j+q_{2k-1}}}{2}\right).
\end{equation}

Similar to the trigonometric case, the hyperbolic B-splines $H_{j,m}$ and $\hypb{N}_{j,m}$ are nonnegative and have local support; see \cite{Schumaker:1983}. Furthermore, they are splines with pieces belonging to
$$
\left\langle 1, \cosh(x), \sinh(x), \cosh(2x), \sinh(2x), \ldots, \cosh(nx), \sinh(nx) \right\rangle.
$$

\begin{remark}
When applying the recursion \eqref{eq:recursion-hypb}, the (intermediate) B-splines $H_{j,2n}$ of even order belong piecewisely to the space
$$
\left\langle \cosh(\tfrac{1}{2}x), \sinh(\tfrac{1}{2}x), \cosh(\tfrac{3}{2}x), \sinh(\tfrac{3}{2}x), \ldots, \cosh((n-\tfrac{1}{2})x), \sinh((n-\tfrac{1}{2})x) \right\rangle.
$$
As in the trigonometric case (see Remark~\ref{rmk:even-trig}), even order does not admit a rescaling to form a partition of unity and is of no interest in this paper.
\end{remark}

\section{Computation of the normalization weights}\label{sec:weights}

The explicit expressions of the normalization weights in \eqref{eq:B-spline-trig-weights-full} and \eqref{eq:B-spline-hypb-weights-full} require the summation of many products of cosine and hyperbolic cosine values, respectively. In this section, we propose a less expensive and more practical way to compute these weights.

Let $\mathcal{S}_n$ ($n\geq1$ integer) be the set of vectors $\boldsymbol{s}=(s_1,\ldots,s_{2n-1})\in\{-1,1\}^{2n-1}$ that consists of all permutations of
\begin{equation}\label{eq:definition-s}
(\underbrace{-1,\ldots,-1}_{n-1 \text{ times}},\underbrace{1,\ldots,1}_{n \text{ times}}).
\end{equation}
Note that the number of vectors in this set equals half the $n$th central binomial coefficient,
$$
|\mathcal{S}_n|=\binom{2n-1}{n-1}=\frac{1}{2}\binom{2n}{n}.
$$

\begin{theorem}\label{thm:weights}
Let $m=2n+1\geq3$.
The normalization weight in \eqref{eq:B-spline-trig-weights-full} can be expressed as
\begin{equation}\label{eq:B-spline-trig-weights}
\trig{w}_{j,m}=\dfrac{1}{|\mathcal{S}_n|}
\sum_{\boldsymbol{s}\in \mathcal{S}_n}\cos\left(\frac{-x_{j+1}+\sum_{k=1}^{2n-1}s_{k} x_{j+k+1}}{2}\right),
\end{equation}
and similarly, the normalization weight in \eqref{eq:B-spline-hypb-weights-full} can be expressed as
\begin{equation}\label{eq:B-spline-hypb-weights}
\hypb{w}_{j,m}=\dfrac{1}{|\mathcal{S}_n|}
\sum_{\boldsymbol{s}\in \mathcal{S}_n}\cosh\left(\frac{-x_{j+1}+\sum_{k=1}^{2n-1}s_{k} x_{j+k+1}}{2}\right).
\end{equation}
\end{theorem}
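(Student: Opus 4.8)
The plan is to expand each product of cosines in \eqref{eq:B-spline-trig-weights-full} into a sum of cosines of signed linear combinations of the knots, and then to reorganize the resulting double sum by collecting equal arguments. The hyperbolic identity \eqref{eq:B-spline-hypb-weights} will follow from the very same computation, because the only properties used are shared by $\cos$ and $\cosh$: both are even and both obey the product-to-sum rule $f(\alpha)f(\beta)=\tfrac12\bigl(f(\alpha+\beta)+f(\alpha-\beta)\bigr)$ (indeed $f(\theta)=\tfrac12(e^{\lambda\theta}+e^{-\lambda\theta})$ with $\lambda=\ii$ and $\lambda=1$, respectively). Iterating this rule gives, for real $\theta_1,\dots,\theta_n$,
\begin{equation*}
\prod_{k=1}^{n}\cos(\theta_k)=\frac{1}{2^{n}}\sum_{\boldsymbol{\epsilon}\in\{-1,1\}^{n}}\cos\Bigl(\sum_{k=1}^{n}\epsilon_k\theta_k\Bigr).
\end{equation*}

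First I would apply this in \eqref{eq:B-spline-trig-weights-full} with $\theta_k=\tfrac12(x_{j+q_{2k}}-x_{j+q_{2k-1}})$. For a fixed $\boldsymbol{q}\in\mathcal{Q}_n$ and $\boldsymbol{\epsilon}\in\{-1,1\}^n$, the argument $\sum_{k=1}^n\epsilon_k(x_{j+q_{2k}}-x_{j+q_{2k-1}})$ is a signed sum $\sum_{i=1}^{2n}\tau_i\,x_{j+i}$ in which, within each pair $\{q_{2k-1},q_{2k}\}$, one index receives sign $+\epsilon_k$ and the other $-\epsilon_k$. Hence the sign vector $\boldsymbol{\tau}=\boldsymbol{\tau}(\boldsymbol{q},\boldsymbol{\epsilon})\in\{-1,1\}^{2n}$ always has exactly $n$ entries $+1$ and $n$ entries $-1$, and
\begin{equation*}
\sum_{\boldsymbol{q}\in\mathcal{Q}_n}\prod_{k=1}^{n}\cos\Bigl(\frac{x_{j+q_{2k}}-x_{j+q_{2k-1}}}{2}\Bigr)
=\frac{1}{2^{n}}\sum_{\boldsymbol{q}\in\mathcal{Q}_n}\sum_{\boldsymbol{\epsilon}\in\{-1,1\}^n}\cos\Bigl(\frac{1}{2}\sum_{i=1}^{2n}\tau_i(\boldsymbol{q},\boldsymbol{\epsilon})\,x_{j+i}\Bigr).
\end{equation*}

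The key step is the fiber count for the map $(\boldsymbol{q},\boldsymbol{\epsilon})\mapsto\boldsymbol{\tau}(\boldsymbol{q},\boldsymbol{\epsilon})$. Its image is exactly the set of \emph{balanced} sign vectors (those with $n$ entries equal to $+1$), and each balanced $\boldsymbol{\tau}$ is attained the same number of times, namely $(n!)^{2}\,2^{n}$: to recover a $(\boldsymbol{q},\boldsymbol{\epsilon})$ in the fiber one chooses a perfect matching between the $n$ indices with $\tau_i=+1$ and the $n$ indices with $\tau_i=-1$ ($n!$ ways), an ordering of these pairs into the slots $k=1,\dots,n$ ($n!$ ways), and a sign $\epsilon_k\in\{-1,1\}$ per slot ($2^{n}$ ways), with $\{q_{2k-1},q_{2k}\}$ the $k$th pair oriented so that $\tau_{q_{2k}}=\epsilon_k$; and this data is uniquely determined by $(\boldsymbol{q},\boldsymbol{\epsilon})$. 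Since $|\mathcal{Q}_n|=(2n)!$, this yields
\begin{equation*}
\trig{w}_{j,m}=\frac{(n!)^{2}}{(2n)!}\sum_{\boldsymbol{\tau}}\cos\Bigl(\frac{1}{2}\sum_{i=1}^{2n}\tau_i\,x_{j+i}\Bigr)=\frac{1}{\binom{2n}{n}}\sum_{\boldsymbol{\tau}}\cos\Bigl(\frac{1}{2}\sum_{i=1}^{2n}\tau_i\,x_{j+i}\Bigr),
\end{equation*}
the sums running over all balanced $\boldsymbol{\tau}\in\{-1,1\}^{2n}$.

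To finish, I would use that $\boldsymbol{\tau}\mapsto-\boldsymbol{\tau}$ is a fixed-point-free involution on the balanced sign vectors that preserves $\cos\bigl(\tfrac12\sum_i\tau_i x_{j+i}\bigr)$ (by evenness of $\cos$); restricting to $\tau_1=-1$ therefore doubles each term while halving the index set, so that $\trig{w}_{j,m}=\binom{2n-1}{n-1}^{-1}\sum_{\boldsymbol{\tau}:\,\tau_1=-1}\cos(\cdots)=|\mathcal{S}_n|^{-1}\sum_{\boldsymbol{\tau}:\,\tau_1=-1}\cos(\cdots)$. A balanced $\boldsymbol{\tau}$ with $\tau_1=-1$ is exactly one whose tail $(\tau_2,\dots,\tau_{2n})$ is a permutation of $n-1$ copies of $-1$ and $n$ copies of $+1$, i.e.\ an element $\boldsymbol{s}=(s_1,\dots,s_{2n-1})$ of $\mathcal{S}_n$ with $s_k=\tau_{k+1}$; moreover $\sum_{i=1}^{2n}\tau_i x_{j+i}=-x_{j+1}+\sum_{k=1}^{2n-1}s_k x_{j+k+1}$, which gives \eqref{eq:B-spline-trig-weights}. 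Replacing $\cos$ by $\cosh$ throughout — legitimate since only evenness and the product-to-sum rule were invoked — gives \eqref{eq:B-spline-hypb-weights}. I expect the fiber count to be the one delicate point: one must verify its uniformity over $\boldsymbol{\tau}$ and keep the pair indexing $(q_1,\dots,q_{2n})$ carefully aligned with the position $i$ of the knot $x_{j+i}$.
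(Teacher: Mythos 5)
Your argument is correct and follows essentially the same route as the paper: expanding each cosine product into elementary terms indexed by balanced sign patterns, collapsing the permutation redundancy (your $(n!)^2\,2^n$ fiber count is the real-variable counterpart of the paper's complex-exponential expansion via \eqref{eq:cos-exp} and its $(n!)^2$-grouping, and both land on the intermediate sum over the $\binom{2n}{n}$ balanced assignments as in \eqref{eq:B-spline-trig-weights-bis}), and then halving by the negation/complement symmetry and evenness to reach $\mathcal{S}_n$. The fiber count you flag as delicate does check out ($\binom{2n}{n}(n!)^2 2^n=(2n)!\,2^n$), and your uniform treatment of $\cos$ and $\cosh$ through the shared product-to-sum rule cleanly covers the hyperbolic case that the paper dismisses as analogous.
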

\begin{proof}
We only consider the trigonometric case (the proof of the hyperbolic case is analogous).
Let us first observe that \eqref{eq:B-spline-trig-weights-full} can be regarded as taking the average of the cosine products over the set $\mathcal{Q}_n$.
Plugging the identity
\begin{equation}\label{eq:cos-exp}
\cos(x)=\frac{\exp(\ii x)+\exp(-\ii x)}{2},
\end{equation}
with $\ii=\sqrt{-1}$, into the expression \eqref{eq:B-spline-trig-weights-full} yields
$$
\trig{w}_{j,m} = \dfrac{1}{(2n)!}
\sum_{\boldsymbol{q}\in \mathcal{Q}_n}\prod_{k=1}^{n}\exp\left(\ii\frac{x_{j+q_{2k}}-x_{j+q_{2k-1}}}{2}\right).
$$
This equals to
$$
\trig{w}_{j,m} = \dfrac{1}{(2n)!}
\sum_{\boldsymbol{q}\in \mathcal{Q}_n}\exp\left(\frac{\ii}{2}\sum_{k=1}^{n}x_{j+q_{2k}}-\frac{\ii}{2}\sum_{k=1}^{n} x_{j+q_{2k-1}}\right).
$$
Due to the summation over all permutations in the set $\mathcal{Q}_n$, several of these terms are identical. Indeed, for a given vector $\boldsymbol{q}$ it is clear that any permutation of the even-indexed elements will not affect the final contribution, and the same for the odd-indexed elements. There are $(n!)^2$ of such vectors and we can group them together.
After pruning the sum in this way, we arrive at
\begin{equation}\label{eq:B-spline-trig-weights-bis}
\trig{w}_{j,m} = \frac{1}{\binom{2n}{n}} \sum_{\substack{1\leq q_1<q_2<\cdots \\ \hfill <q_n\leq 2n}}
\exp\left(\ii\sum_{k=1}^n x_{j+q_k} - \frac{\ii}{2}\sum_{k=1}^{2n} x_{j+k}\right).
\end{equation}
Finally, by taking into account the definition of the set $\mathcal{S}_n$ and the identity \eqref{eq:cos-exp}, the expression in \eqref{eq:B-spline-trig-weights-bis} can be rewritten into \eqref{eq:B-spline-trig-weights}.
\end{proof}

\begin{example}
Elaborating the normalization weight in \eqref{eq:B-spline-trig-weights} for $m=3,5$ ($n=1,2$) gives
$$
\trig{w}_{j,3}=\cos\left(\dfrac{-x_{j+1}+x_{j+2}}{2}\right),
$$
and
$$
\begin{aligned}
\trig{w}_{j,5}=\frac{1}{3}&\left[\cos\left(\frac{-x_{j+1}-x_{j+2}+x_{j+3}+x_{j+4}}{2}\right)\right.\\
&+\cos\left(\frac{-x_{j+1}+x_{j+2}-x_{j+3}+x_{j+4}}{2}\right)\\
&+\left.\cos\left(\frac{-x_{j+1}+x_{j+2}+x_{j+3}-x_{j+4}}{2}\right)\right].
\end{aligned}
$$
\end{example}

\begin{example}
Elaborating the normalization weight in \eqref{eq:B-spline-hypb-weights} for $m=3,5$ ($n=1,2$) gives
$$
\hypb{w}_{j,3}=\cosh\left(\dfrac{-x_{j+1}+x_{j+2}}{2}\right),
$$
and
$$
\begin{aligned}
\hypb{w}_{j,5}=\frac{1}{3}&\left[\cosh\left(\frac{-x_{j+1}-x_{j+2}+x_{j+3}+x_{j+4}}{2}\right)\right.\\
&+\cosh\left(\frac{-x_{j+1}+x_{j+2}-x_{j+3}+x_{j+4}}{2}\right)\\
&+\left.\cosh\left(\frac{-x_{j+1}+x_{j+2}+x_{j+3}-x_{j+4}}{2}\right)\right].
\end{aligned}
$$
\end{example}

The expressions of the normalization weights $\trig{w}_{j,m}$ and $\hypb{w}_{j,m}$ in \eqref{eq:B-spline-trig-weights} and \eqref{eq:B-spline-hypb-weights} can be calculated using the method described in Algorithm~\ref{alg:weights}. Each vector of the set $\mathcal{S}_n$ can be built recursively by adding either $-1$ or $1$ as next element in the vector, as long as their total so far is less than $n-1$ or $n$, respectively. Their number is stored as $\ell_-$ and $\ell_+$, respectively. The partial sum of knots is stored as $y=-x_{j+1}+\sum_{i=1}^{k-1}s_{i} x_{j+i+1}$, with $k=\ell_-+\ell_++1$.

\begin{algorithm}[t!]
\caption{Computation of the normalization weight $\gene{w}_{j,m}$, $X\in\{T,H\}$, according to \eqref{eq:B-spline-trig-weights} and \eqref{eq:B-spline-hypb-weights} for any $j$. Let $m=2n+1$ ($n\geq0$ integer) and suppose that $\{x_j\}$ is a nondecreasing sequence of real numbers. We set $c^T(x)=\cos(x)$ and $c^H(x)=\cosh(x)$.}\label{alg:weights}
\medskip
\Fn{$\operatorname{weight}(j)$}{
\eIf{$n = 0$}{
    \Return 1\;
}{
    $f_n \gets \prod_{i=1}^{n-1} i/(n+i)$\;
    $\gene{w}_{j,2n+1} \gets f_n \cdot \operatorname{weight\_rec}(j, 1, 0, 0, -x_{j+1})$\;
    \Return $\gene{w}_{j,2n+1}$\;
}
}
\medskip
\Fn{$\operatorname{weight\_rec}(j, k, \ell_-, \ell_+, y)$}{
\eIf{$k = 2n$}
{
    \Return $\gene{c}(y/2)$\;
}{
    $w \gets 0$\;
    \If{$\ell_- < n-1$}
    {
        $w \gets w + \operatorname{weight\_rec}(j, k + 1, \ell_- + 1, \ell_+, y - x_{j + k + 1})$\;
    }
    \If{$\ell_+ < n$}
    {
        $w \gets w + \operatorname{weight\_rec}(j, k + 1, \ell_-, \ell_+ + 1, y + x_{j + k + 1})$\;
    }
    \Return $w$\;
}
}
\end{algorithm}

It can be verified that
$$
|\mathcal{S}_n| = 
\frac{|\mathcal{Q}_n|}{2(n!)^2},
$$
which indicates that the number of terms in \eqref{eq:B-spline-trig-weights} and \eqref{eq:B-spline-hypb-weights} grows substantially slower in $n$ compared to \eqref{eq:B-spline-trig-weights-full} and \eqref{eq:B-spline-hypb-weights-full}, respectively. The cardinality of both sets is depicted in Figure~\ref{fig:card-set} for $n=1,\ldots,10$. Therefore, the proposed approach is far more efficient than computing the sums in \eqref{eq:B-spline-trig-weights-full} and \eqref{eq:B-spline-hypb-weights-full}.

\begin{figure}[t!]
\centering
\includegraphics[height=6cm]{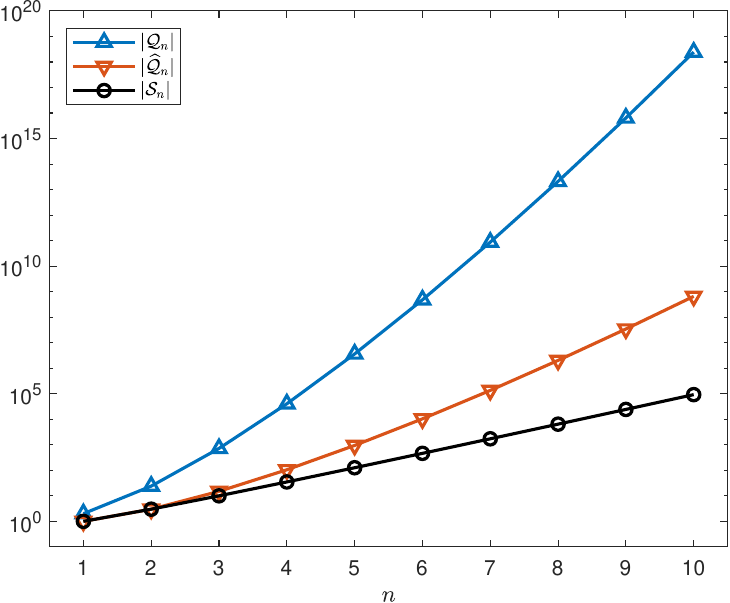}
\caption{Cardinality of the sets $\mathcal{Q}_n$, $\widehat{\mathcal{Q}}_n$, $\mathcal{S}_n$ for different values of $n$ (in logarithmic scale).}\label{fig:card-set}
\end{figure}

\begin{remark}
Since several terms in the sum of \eqref{eq:B-spline-trig-weights-full} are identical, there is quite a lot of redundancy in that expression and, after direct pruning, one obtains
\begin{equation}\label{eq:B-spline-trig-weights-tris}
\trig{w}_{j,m} = \dfrac{1}{|\widehat{\mathcal{Q}}_n|}
\sum_{\boldsymbol{q}\in \widehat{\mathcal{Q}}_n}\prod_{k=1}^{n}\cos\left(\frac{x_{j+q_{2k}}-x_{j+q_{2k-1}}}{2}\right)
\end{equation}
for $m=2n+1\geq3$, where $\widehat{\mathcal{Q}}_n\subset\mathcal{Q}_n$ is the set of vectors $\boldsymbol{q}=(q_1,\ldots,q_{2n})\in\mathbb{Z}^{2n}$ that consists of all permutations of
$$
(1,2,\ldots,2n)
$$
such that
$$
q_1<q_3<\cdots<q_{2n-1},
$$
and
$$
q_{2k-1}<q_{2k}, \quad k=1,2,\ldots,n.
$$
This simplified expression was also observed in \cite{LiS:2024}.
Note that
$$
|\widehat{\mathcal{Q}}_n| = (2n-1)!!,
$$
and one can check that
$$
|\mathcal{S}_n| = |\widehat{\mathcal{Q}}_n|\, \frac{2^{n-1}}{n!}.
$$
Even though there are less terms in the sum of \eqref{eq:B-spline-trig-weights-tris} compared to \eqref{eq:B-spline-trig-weights-full}, it is still more efficient to compute the expression in \eqref{eq:B-spline-trig-weights}; see Figure~\ref{fig:card-set} for the growth of $|\widehat{\mathcal{Q}}_n|$, compared with $|\mathcal{Q}_n|$ and $|\mathcal{S}_n|$.
Similarly, we have
$$
\hypb{w}_{j,m} = \dfrac{1}{|\widehat{\mathcal{Q}}_n|}
\sum_{\boldsymbol{q}\in \widehat{\mathcal{Q}}_n}\prod_{k=1}^{n}\cosh\left(\frac{x_{j+q_{2k}}-x_{j+q_{2k-1}}}{2}\right)
$$
for $m=2n+1\geq3$.
\end{remark}

\begin{remark}
The normalization weight $\trig{w}_{j,m}$ can also be expressed in integral form as follows:
\begin{equation}\label{eq:B-spline-trig-weights-int}
\trig{w}_{j,m} = \dfrac{2^{2n-1}}{|\mathcal{S}_n|}
\frac{1}{2\pi}\int_{0}^{2\pi}\prod_{k=1}^{2n}\sin\left(\dfrac{y-x_{j+k}}{2}\right)\textrm{d}y
\end{equation}
for $m=2n+1\geq3$. Its proof is delegated to Appendix~\ref{sec:B-spline-trig-weights-int}. However, due to the integration, the identity in \eqref{eq:B-spline-trig-weights-int} is less suited for practical computation.
There does not seem to be a hyperbolic version of this formula.
\end{remark}

Finally, let us consider the special case of uniform knots, i.e.,
\begin{equation}\label{eq:knots-uniform}
x_{j+k}=x_j+k h, \quad k=1,2,\ldots,m,
\end{equation}
for some real number $h>0$.
From the definition of the vector $\boldsymbol{s}$ in \eqref{eq:definition-s} it follows that its elements satisfy
$$
\sum_{k=1}^{2n-1}s_{k}=1.
$$
This gives
\begin{equation}\label{eq:y}
y_{j,m,\boldsymbol{s}}=-x_{j+1}+\sum_{k=1}^{2n-1}s_{k} x_{j+k+1}=\sum_{k=1}^{2n-1}s_{k} kh.
\end{equation}
It is clear that the value of $y_{j,m,\boldsymbol{s}}$ does not depend on the value of $j$, which implies that the normalization weights $\trig{w}_{j,m}$ and $\hypb{w}_{j,m}$ are constant with respect to $j$ in the case of uniform knots (see Theorem~\ref{thm:weights}).

To derive a simplified expression for the corresponding normalization weights, we make use of $q$-binomial coefficients (also called Gaussian coefficients). These coefficients can be obtained through recurrence.
We have
$$
\qbinom{b}{0}_q=\qbinom{a}{a}_q=1
$$
and
\begin{equation}\label{eq:q-binom}
\qbinom{a+b}{a}_q=\qbinom{a+b-1}{a-1}_q+q^a\qbinom{a+b-1}{a}_q
\end{equation}
for integers $a,b\geq1$. We refer the reader to, e.g., \cite[Section~1.6]{Aigner:2007} for details. If $q=1$ they become the standard binomial coefficients, i.e.,
$$
\qbinom{a+b}{a}_1=\binom{a+b}{a}.
$$
In general, $q$-binomial coefficients are a polynomial in $q$, which can be expressed as
\begin{equation}\label{eq:rho}
\qbinom{a+b}{a}_q=\sum_{i=0}^{ab} \rho_{i,a,b}\, q^i.
\end{equation}
The value $\rho_{i,a,b}$ can be interpreted as the number of ways of throwing $i$ indistinguishable balls into $a$ indistinguishable bins, where each bin can contain up to $b$ balls. All the $\rho_{i,a,b}$, $i=0,\ldots,ab$, appearing in the sum of \eqref{eq:rho}, can be computed using the method described in Algorithm~\ref{alg:rho}, which is based on \eqref{eq:q-binom}.

\begin{algorithm}[t!]
\caption{Computation of the vector $\boldsymbol{\rho}_{a,b}$ containing the values $\rho_{i,a,b}$, $i=0,\ldots,ab$, appearing in the sum of \eqref{eq:rho}, for given nonnegative integers $a$ and $b$.
Let $\boldsymbol{1}_k$ be the vector of length $k$ consisting of all ones.}\label{alg:rho}
\medskip
\Fn{$\operatorname{rho}(a,b)$}{
$\boldsymbol{\rho}_{a,b} \gets \boldsymbol{1}_{ab+1}$\;
\If{$a \geq 2$ \And $b \geq 2$}{
    $\boldsymbol{\rho}_{a-1,b} \gets \operatorname{rho}(a-1,b)$\;
    $\boldsymbol{\rho}_{a,b-1} \gets \operatorname{rho}(a,b-1)$\;
    \For{$i=0,\ldots,a-1$}{
        $\rho_{i,a,b} \gets \rho_{i,a-1,b}$\;
    }
    \For{$i=a,\ldots,(a-1)b$}{
        $\rho_{i,a,b} \gets \rho_{i,a-1,b} + \rho_{i-a,a,b-1}$\;
    }
    \For{$i=(a-1)b+1,\ldots,ab$}{
        $\rho_{i,a,b} \gets \rho_{i-a,a,b-1}$\;
    }
}
\Return $\boldsymbol{\rho}_{a,b}$\;
}
\end{algorithm}

\begin{theorem}\label{thm:weights-uniform}
Let $m=2n+1\geq3$ and consider the case of uniform knots as in \eqref{eq:knots-uniform}.
The normalization weight in \eqref{eq:B-spline-trig-weights-full} can be expressed as
\begin{equation}\label{eq:B-spline-trig-weights-uniform}
\trig{w}_{j,m}=\dfrac{1}{|\mathcal{S}_n|}
\sum_{i=0}^{n(n-1)}\rho_{i,n-1,n}\cos\left(\frac{n^2-2i}{2}h\right),
\end{equation}
and similarly, the normalization weight in \eqref{eq:B-spline-hypb-weights-full} can be expressed as
\begin{equation}\label{eq:B-spline-hypb-weights-uniform}
\hypb{w}_{j,m}=\dfrac{1}{|\mathcal{S}_n|}
\sum_{i=0}^{n(n-1)}\rho_{i,n-1,n}\cosh\left(\frac{n^2-2i}{2}h\right).
\end{equation}
The value $\rho_{i,a,b}$ is defined in \eqref{eq:rho}.
\end{theorem}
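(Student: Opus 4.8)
The plan is to specialize the general formula \eqref{eq:B-spline-trig-weights} from Theorem~\ref{thm:weights}, exploiting the fact already recorded in \eqref{eq:y} that for uniform knots the argument $y_{j,m,\boldsymbol{s}}=h\sum_{k=1}^{2n-1}s_k k$ does not depend on $j$. First I would encode each vector $\boldsymbol{s}\in\mathcal{S}_n$ by the set $S_-=\{k:s_k=-1\}$; by the definition \eqref{eq:definition-s} this is an $(n-1)$-element subset of $\{1,\dots,2n-1\}$, and $\boldsymbol{s}\mapsto S_-$ is a bijection from $\mathcal{S}_n$ onto all such subsets. Writing $\sigma=\sum_{k\in S_-}k$ and using $\sum_{k=1}^{2n-1}k=n(2n-1)$, a one-line computation gives $\sum_{k=1}^{2n-1}s_k k=n(2n-1)-2\sigma$, hence
\[
y_{j,m,\boldsymbol{s}}=h\bigl(n(2n-1)-2\sigma\bigr).
\]

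The next step is to track the value of $\sigma$ and its multiplicity. Its extreme values over the $(n-1)$-subsets of $\{1,\dots,2n-1\}$ are $\sigma_{\min}=1+\dots+(n-1)=\tfrac12 n(n-1)$ and $\sigma_{\max}=(n+1)+\dots+(2n-1)=\tfrac32 n(n-1)$, and every intermediate integer is attained (increase one element at a time, starting from $\{1,\dots,n-1\}$). Writing $\sigma=\tfrac12 n(n-1)+i$ with $i\in\{0,1,\dots,n(n-1)\}$ turns the argument above into $y_{j,m,\boldsymbol{s}}/2=\tfrac12(n^2-2i)h$. To count, for a fixed $i$, how many subsets $S_-=\{r_1<\dots<r_{n-1}\}$ satisfy $\sigma=\tfrac12 n(n-1)+i$, I would use the classical substitution $p_k=r_k-k$: it is a bijection onto the weakly increasing sequences $0\le p_1\le\dots\le p_{n-1}\le n$ with $\sum_k p_k=i$, that is, onto the partitions of $i$ into at most $n-1$ parts each at most $n$. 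By the combinatorial meaning of $\rho_{i,a,b}$ together with \eqref{eq:rho} (taking $a=n-1$, $b=n$), this number equals $\rho_{i,n-1,n}$.

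Grouping the terms of \eqref{eq:B-spline-trig-weights} according to the value of $i$ then yields
\[
\trig{w}_{j,m}=\frac{1}{|\mathcal{S}_n|}\sum_{\boldsymbol{s}\in\mathcal{S}_n}\cos\!\left(\frac{y_{j,m,\boldsymbol{s}}}{2}\right)
=\frac{1}{|\mathcal{S}_n|}\sum_{i=0}^{n(n-1)}\rho_{i,n-1,n}\cos\!\left(\frac{n^2-2i}{2}h\right),
\]
which is \eqref{eq:B-spline-trig-weights-uniform}; the identity \eqref{eq:B-spline-hypb-weights-uniform} follows in exactly the same way from \eqref{eq:B-spline-hypb-weights}, with $\cos$ replaced by $\cosh$ throughout. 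The only step that is not pure bookkeeping with arithmetic progressions is the identification of the number of restricted subset-sums with a coefficient of the $q$-binomial coefficient $\qbinom{2n-1}{n-1}_q$; this is also where the value $2n-1$ in the definition of $\mathcal{S}_n$ enters precisely as needed. As a consistency check, setting $h=0$ forces $\trig{w}_{j,m}=1$, matching $\sum_{i=0}^{n(n-1)}\rho_{i,n-1,n}=\qbinom{2n-1}{n-1}_1=\binom{2n-1}{n-1}=|\mathcal{S}_n|$.
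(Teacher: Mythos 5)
Your proposal is correct and follows essentially the same route as the paper's proof: specialize \eqref{eq:B-spline-trig-weights} to uniform knots, group the vectors $\boldsymbol{s}\in\mathcal{S}_n$ by the sum of their negative-index positions, and identify the multiplicity of each group with $\rho_{i,n-1,n}$ so that the cosine argument becomes $\tfrac{1}{2}(n^2-2i)h$. The only difference is cosmetic: you spell out the counting step (the bijection $p_k=r_k-k$ onto partitions of $i$ into at most $n-1$ parts of size at most $n$) that the paper simply asserts via the combinatorial meaning of $\rho_{i,a,b}$.
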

\begin{proof}
We only consider the trigonometric case (the proof of the hyperbolic case is analogous).
Let us go back to the expression of $y_{j,m,\boldsymbol{s}}$ in \eqref{eq:y}. After factoring out $h$, we can split the last sum in two parts:
$$
\sum_{k=1}^{2n-1}s_{k} k=\sigma_+ + \sigma_-,
$$
where $\sigma_+$ collects the positive terms while $\sigma_-$ the negative terms, and thus
\begin{equation}\label{eq:sigma-diff}
\binom{2n}{2} = \sum_{k=1}^{2n-1} k = \sigma_+ - \sigma_-.
\end{equation}
The number of vectors $\boldsymbol{s}\in \mathcal{S}_n$ with the same value for $\sigma_-$ (and also the same value for $\sigma_+$) equals
$$
\rho_{-\sigma_--\binom{n}{2},n-1,n} =
\rho_{\sigma_+-\binom{n+1}{2},n,n-1},
$$
with $\rho_{i,a,b}$ defined in \eqref{eq:rho}.

Looking at \eqref{eq:B-spline-trig-weights}, we can rewrite each cosine term as
$$
c_{j,m,\boldsymbol{s}}
= \cos\left(\frac{y_{j,m,\boldsymbol{s}}}{2}\right)
= \cos\left(\frac{\sigma_+ + \sigma_-}{2}h\right).
$$
Then, using \eqref{eq:sigma-diff} and the variable $i=-\sigma_--\binom{n}{2}$, we get
$$
c_{j,m,\boldsymbol{s}}
= \cos\left(\frac{\binom{2n}{2} + 2\sigma_-}{2}h\right)
= \cos\left(\frac{n^2-2i}{2}h\right).
$$
This completes the proof of \eqref{eq:B-spline-trig-weights-uniform}.
\end{proof}

Note that
$$
\sum_{i=0}^{n(n-1)} \rho_{i,n-1,n}=\qbinom{2n-1}{n-1}_1=\binom{2n-1}{n-1}=|\mathcal{S}_n|.
$$
Furthermore, we have
\begin{equation}\label{eq:symmetry-uniform}
n^2-2i_1 = -(n^2-2i_2) \quad \text{if\ } i_1+i_2=n^2.
\end{equation}
Since the functions $\cos$ and $\cosh$ are even, one could merge the terms in
\eqref{eq:B-spline-trig-weights-uniform} and \eqref{eq:B-spline-hypb-weights-uniform} related to \eqref{eq:symmetry-uniform} for the range $n\leq i_1<n^2/2$ (which implies $n^2/2<i_2\leq n(n-1)$).

\begin{remark}
Considering the uniform setting as in \eqref{eq:knots-uniform}, the following alternative expression for $\trig{w}_{j,m}$ can be found in \cite{Walz:1997}:
$$
\begin{aligned}
\trig{w}_{j,m}=
\frac{2^{n-1}}{|\mathcal{S}_n|}
\sum_{i=0}^{\lfloor n/2 \rfloor} \frac{1}{2^{2i}}\binom{2i}{i}\sum_{\substack{1\leq q_1<q_2<\cdots \\ \hfill <q_{n-2i}\leq n}} \prod_{k=1}^{n-2i}\cos\left(\frac{2q_k-1}{2}h\right).
\end{aligned}
$$
The number of terms in this expression equals
$$
\sum_{i=0}^{\lfloor n/2 \rfloor}\binom{n}{n-2i}
= 2^{n-1},
$$
which clearly grows faster in $n$ than the number of terms needed in \eqref{eq:B-spline-trig-weights-uniform}, an exponential versus polynomial growth.
\end{remark}

\section{Normalized B-spline examples}\label{sec:examples}

The normalized B-splines $\trig{N}_{j,m}$ and $\hypb{N}_{j,m}$ can be computed through their definitions in \eqref{eq:B-spline-trig-def} and \eqref{eq:B-spline-hypb-def}, respectively. Alternatively, the recurrence relations for $T_{j,m}$ and $H_{j,m}$ can be rephrased directly in terms of normalized B-splines as follows. For $m=1$ it is clear that
$$
\trig{N}_{j,1}(x)=\hypb{N}_{j,1}(x)=\begin{cases}
1 & \text{if } x_j\leq x< x_{j+1},\\
0 & \text{otherwise}.
\end{cases}
$$
Since the normalized B-splines are only available for odd order, we arrive at a three-term recurrence. Let us introduce the shorthand notation
$$
\trig{s}(x,y)=\sin\left(\dfrac{x-y}{2}\right), \quad
\hypb{s}(x,y)=\sinh\left(\dfrac{x-y}{2}\right).
$$
Then, for $m=2n+1\geq3$ and $X\in\{T,H\}$,
\begin{equation}\label{eq:recursion-normalized}
\begin{aligned}
\gene{N}_{j,m}(x)
&= \dfrac{\gene{s}(x,x_j)\gene{s}(x,x_j)}{\gene{s}(x_{j+m-1},x_j)\gene{s}(x_{j+m-2},x_j)}\dfrac{\gene{w}_{j,m}}{\gene{w}_{j,m-2}}\gene{N}_{j,m-2}(x)\\
&+\biggl[\dfrac{\gene{s}(x,x_j)\gene{s}(x_{j+m-1},x)}{\gene{s}(x_{j+m-1},x_j)\gene{s}(x_{j+m-1},x_{j+1})}\\
&\quad+\dfrac{\gene{s}(x_{j+m},x)\gene{s}(x,x_{j+1})}{\gene{s}(x_{j+m},x_{j+1})\gene{s}(x_{j+m-1},x_{j+1})}\biggr]\dfrac{\gene{w}_{j,m}}{\gene{w}_{j+1,m-2}}\gene{N}_{j+1,m-2}(x)\\
&+\dfrac{\gene{s}(x_{j+m},x)\gene{s}(x_{j+m},x)}{\gene{s}(x_{j+m},x_{j+1})\gene{s}(x_{j+m},x_{j+2})}\dfrac{\gene{w}_{j,m}}{\gene{w}_{j+2,m-2}}\gene{N}_{j+2,m-2}(x).
\end{aligned}
\end{equation}

\begin{figure}[t!]
\centering
\subfigure[B-spline basis of order $m=3$]{
\includegraphics[height=5cm]{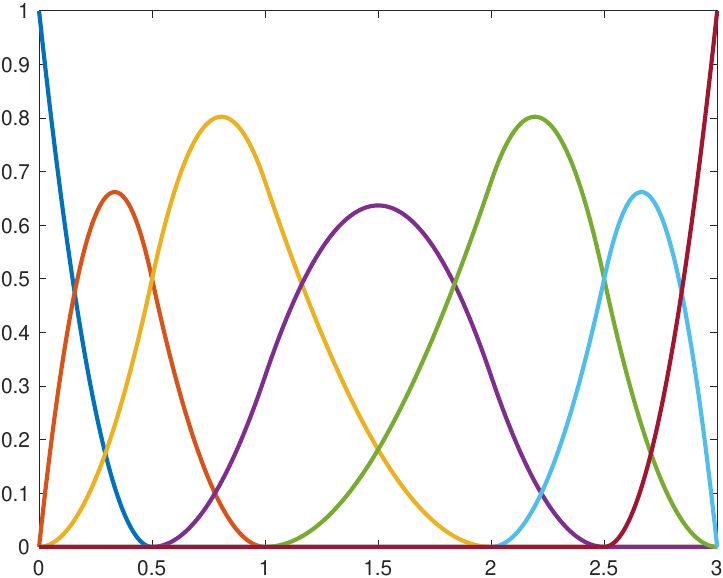}} \hspace*{0.5cm}
\subfigure[B-spline basis of order $m=5$]{
\includegraphics[height=5cm]{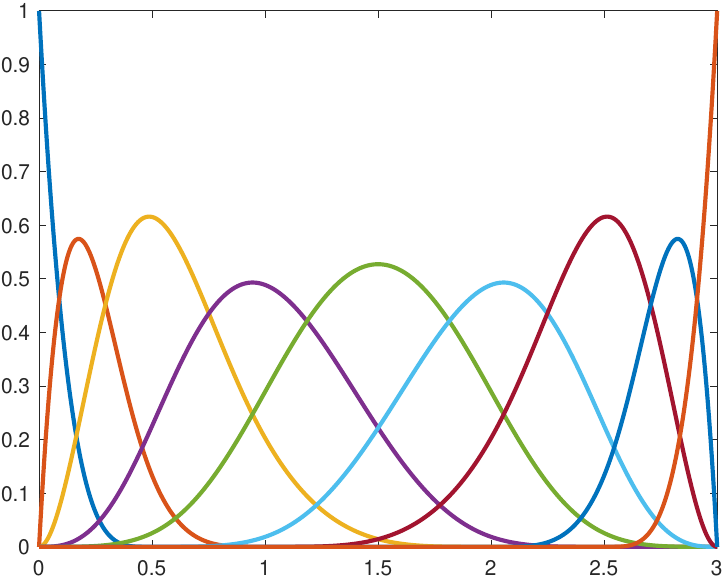}} \\
\subfigure[B-spline basis of order $m=7$]{
\includegraphics[height=5cm]{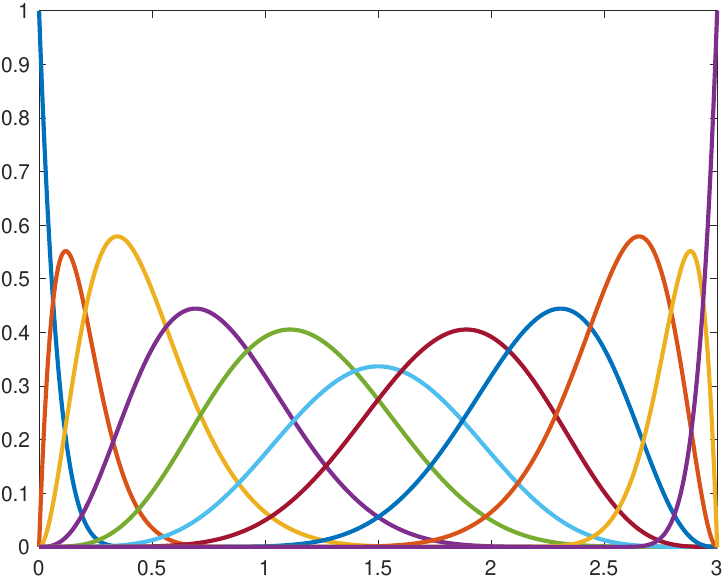}} \hspace*{0.5cm}
\subfigure[B-spline basis of order $m=9$]{
\includegraphics[height=5cm]{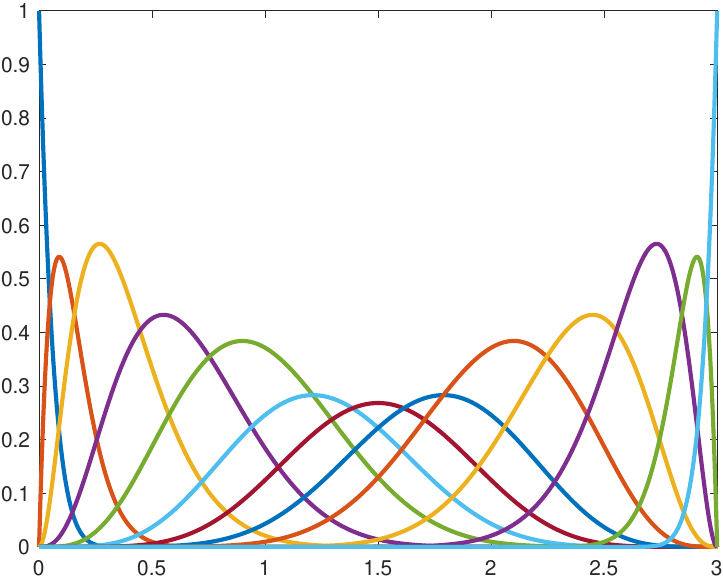}}
\caption{Visualization of sets of normalized trigonometric B-splines $\trig{N}_{j,m}$ defined on the knot sequence \eqref{eq:ex-B-splines-knots} for different values of $m$ as described in Example~\ref{ex:B-splines}. The use of open knots leads to interpolation at the end points of the interval.}\label{fig:B-splines-trig}
\end{figure}
\begin{figure}[t!]
\centering
\subfigure[B-spline basis of order $m=3$]{
\includegraphics[height=5cm]{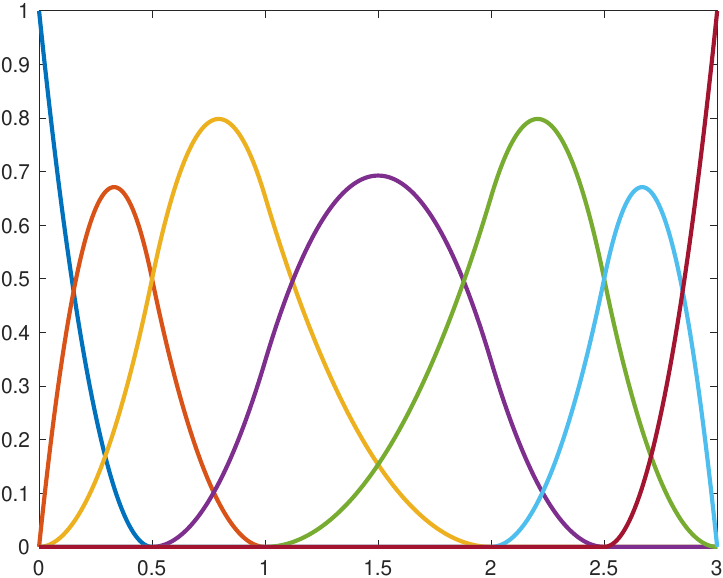}} \hspace*{0.5cm}
\subfigure[B-spline basis of order $m=5$]{
\includegraphics[height=5cm]{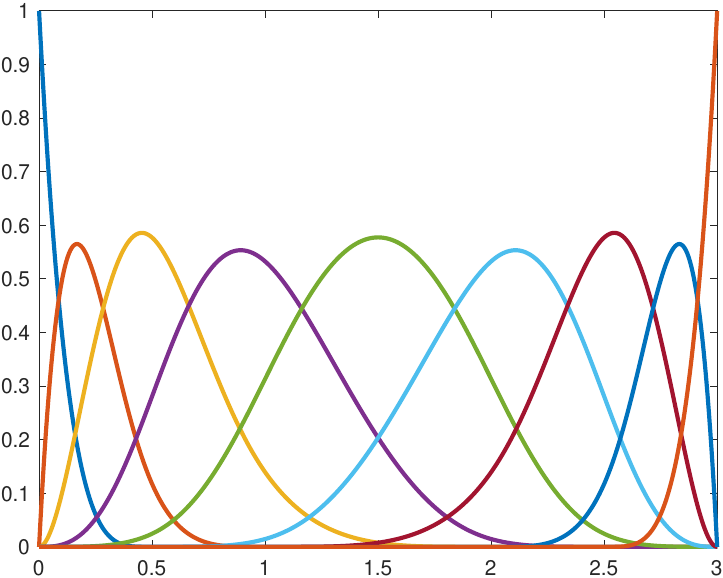}} \\
\subfigure[B-spline basis of order $m=7$]{
\includegraphics[height=5cm]{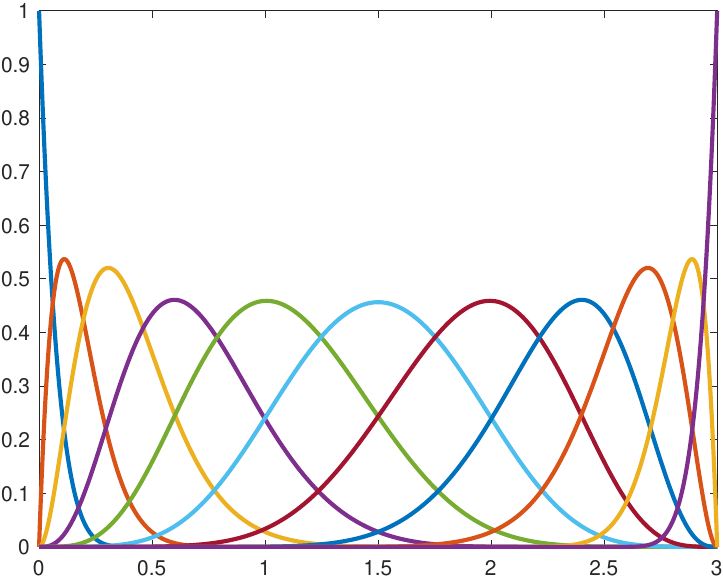}} \hspace*{0.5cm}
\subfigure[B-spline basis of order $m=9$]{
\includegraphics[height=5cm]{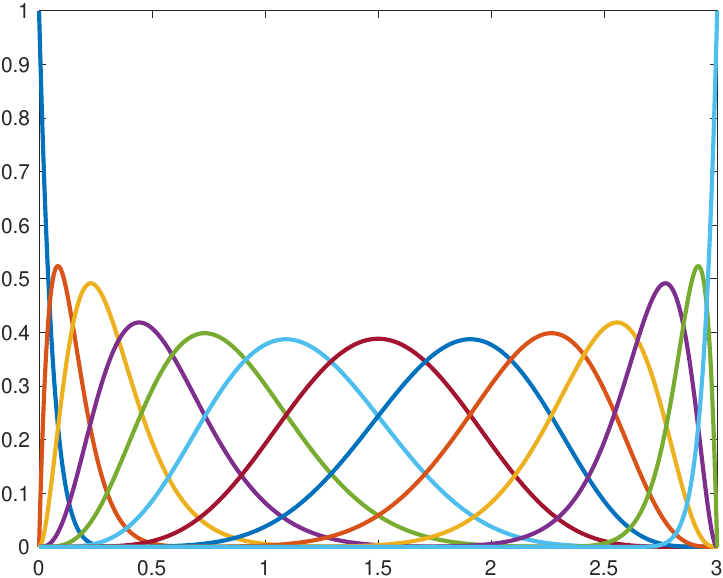}}
\caption{Visualization of sets of normalized hyperbolic B-splines $\hypb{N}_{j,m}$ defined on the knot sequence \eqref{eq:ex-B-splines-knots} for different values of $m$ as described in Example~\ref{ex:B-splines}. The use of open knots leads to interpolation at the end points of the interval.}\label{fig:B-splines-hypb}
\end{figure}

Like for standard (polynomial) normalized B-splines, choosing an open knot configuration $\{x_j\}$, i.e., the first $m$ and the last $m$ knots coincide, leads to interpolation at the end points of the interval. This is illustrated in the following example.

\begin{example}\label{ex:B-splines}
Consider the knot sequence
\begin{equation}\label{eq:ex-B-splines-knots}
\biggl\{\underbrace{0,\ldots,0}_{m \text{ times}},\frac{1}{2},1,2,\frac{5}{2},\underbrace{3,\ldots,3}_{m \text{ times}}\biggr\}
\end{equation}
for different values of $m=2n+1$ ($n=1,2,3,4$). The corresponding normalized B-splines $\trig{N}_{j,m}$ and $\hypb{N}_{j,m}$ are visualized in Figures~\ref{fig:B-splines-trig} and~\ref{fig:B-splines-hypb}, respectively. Note that the dimension of the spline space over the knot sequence \eqref{eq:ex-B-splines-knots} equals $m+4$.
\end{example}

Let $m$ odd, $m\leq l\leq r$, and $x_l<x_{r+1}$. A normalized trigonometric or hyperbolic B-spline curve $\boldsymbol{C}$ is given by
$$
\boldsymbol{C}(x)=\sum_{j=l+1-m}^{r}\boldsymbol{P}_j\gene{N}_{j,m}(x),
\quad x_l\leq x<x_{r+1},
$$
with $X\in\{T,H\}$ and some set of control points $\{\boldsymbol{P}_j\}$.
By the partition of unity and the nonnegativity of the B-splines, such representation is particularly suited for geometric modeling.
Moreover, conic sections can be easily described by means of normalized trigonometric and hyperbolic B-spline curves.
In the following examples we show some simple representations of a circle.

\begin{example}\label{ex:circle}
A normalized trigonometric B-spline curve of order $m=3$ describing a full circle (with radius $r=1$) can be obtained by considering a uniform knot sequence of the form
\begin{equation}\label{eq:ex-circle-knots}
\biggl\{-\frac{4\pi}{p},-\frac{2\pi}{p},0,\frac{2\pi}{p},\ldots,\frac{2(p-1)\pi}{p},2\pi,\frac{2(p+1)\pi}{p},\frac{2(p+2)\pi}{p}\biggr\}
\end{equation}
and by selecting the corresponding control points as the corners of a regular $p$-sided polygon 
($p\geq3$ integer),
\begin{equation}\label{eq:ex-circle-points}
\boldsymbol{P}_j = \left(\frac{\cos(\theta+2j\pi/p)}{\cos(\pi/p)}, \frac{\sin(\theta+2j\pi/p)}{\cos(\pi/p)}\right)^T
\end{equation}
for $j=1,\ldots,p+2$, with any real value $\theta$.
The parametric domain is $[0,2\pi)$ and the first two control points coincide with the last two control points. It is easy to check that $\trig{w}_{j,3}=\cos(\pi/p)$.
The cases $p=4,8$ are illustrated in Figure~\ref{fig:circle}. Note that the case $p=4$ does not satisfy the requirement $x_{j+3}-x_j<\pi$ (Remark~\ref{rmk:knots-trig}), but it still leads to a valid set of normalized trigonometric B-splines; see also the limit case $\ell=0$ in \cite[Example~8]{Speleers:2022}. Finally, we highlight that the spline representation is globally $C^1$ and each leg of the $p$-sided polygon is tangent to the circle, which is a nice feature for design.
\end{example}

\begin{figure}[t!]
\centering
\subfigure[Control points and B-spline basis for $m=3$, $p=4$]
{\includegraphics[height=5cm]{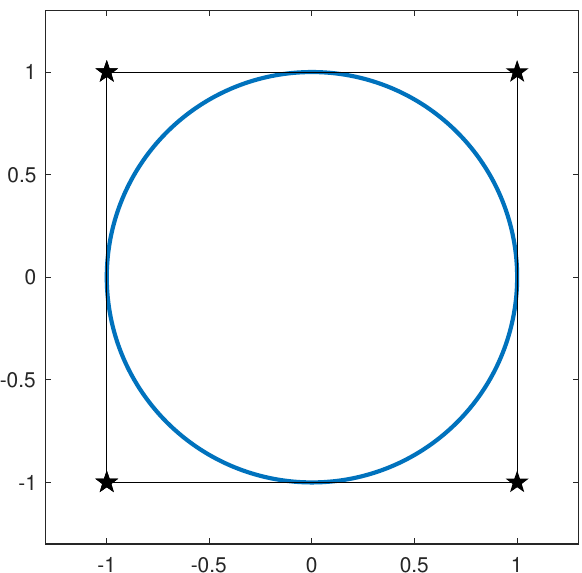} \hspace*{0.8cm}
 \includegraphics[height=5cm]{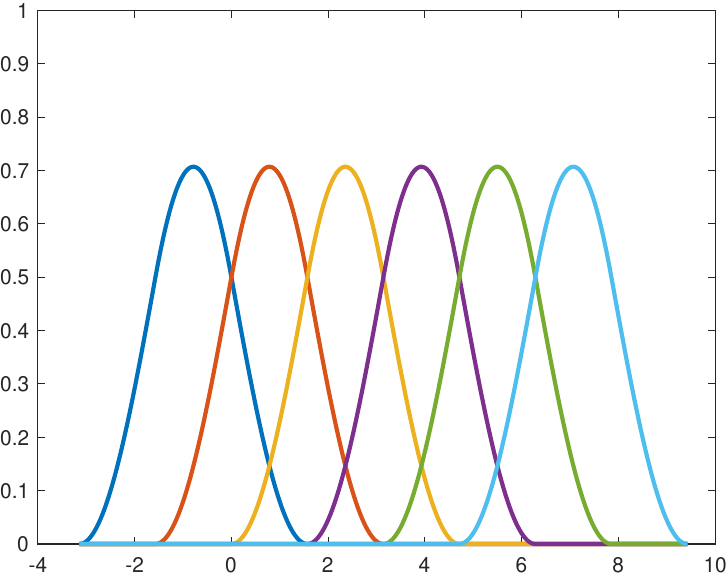}} \\
\subfigure[Control points and B-spline basis for $m=3$, $p=8$]
{\includegraphics[height=5cm]{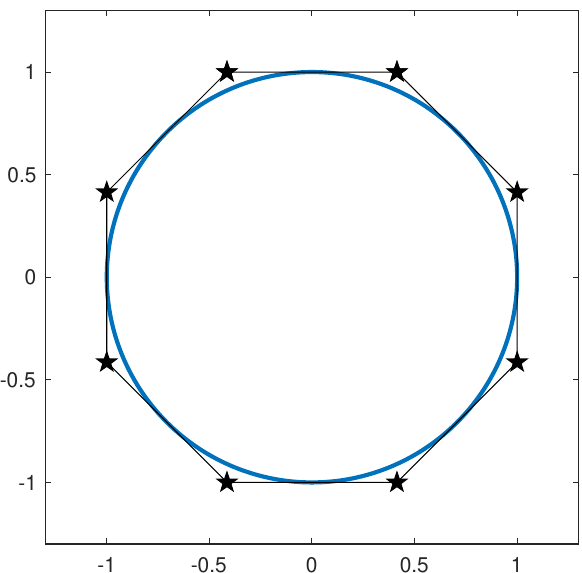} \hspace*{0.8cm}
 \includegraphics[height=5cm]{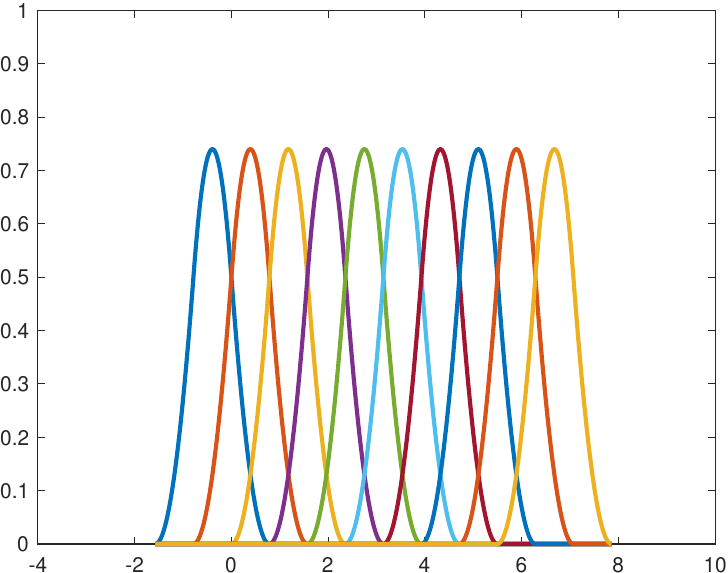}}
\caption{Representation of a full circle by means of a normalized trigonometric B-spline curve of order $m=3$ defined on the knot sequence \eqref{eq:ex-circle-knots} for $p=4,8$ as described in Example~\ref{ex:circle} ($\theta=\pi/p$). The corresponding control points are indicated with the symbol~$\star$ and form a regular $p$-sided polygon 
that is tangential to the circle.}\label{fig:circle}
\end{figure}

A similar representation of a full circle can be formulated for any odd order.
\begin{example}\label{ex:circle-m}
Let $m=2n+1\geq3$ and consider a uniform knot sequence of the form
\begin{equation}\label{eq:ex-circle-knots-m}
\biggl\{\frac{2k\pi}{p}:k=-2n,-2n+1,\ldots,p+2n \biggr\}
\end{equation}
for some integer $p\geq m$.
Then, selecting the same control points as in
\eqref{eq:ex-circle-points} for $j=1,\ldots,p+2n$, with any real value~$\theta$, results in a normalized trigonometric B-spline curve of order $m$ describing a full circle. The parametric domain is again $[0,2\pi)$. The cases $m=5,7$ are illustrated in Figure~\ref{fig:circle-m}, where $p=8$. We observe that for $m>3$ the $p$-sided polygon is not tangent to the circle anymore.
Since all the knots in \eqref{eq:ex-circle-knots-m} are distinct, the spline representation has maximal smoothness and is globally $C^{m-2}$. On the other hand, a standard NURBS representation of a full circle with the same smoothness has to be at least of order $2m-1$ instead of $m$ \cite{BangertP:1997}.
\end{example}

\begin{figure}[t!]
\centering
\subfigure[Control points and B-spline basis for $m=5$, $p=8$]
{\includegraphics[height=5cm]{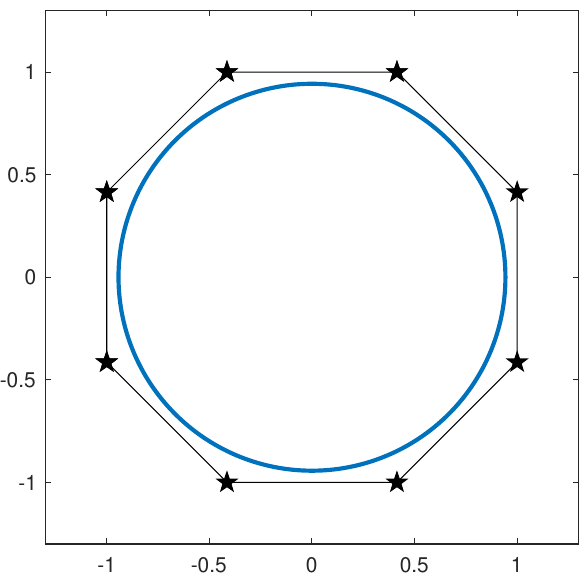} \hspace*{0.8cm}
 \includegraphics[height=5cm]{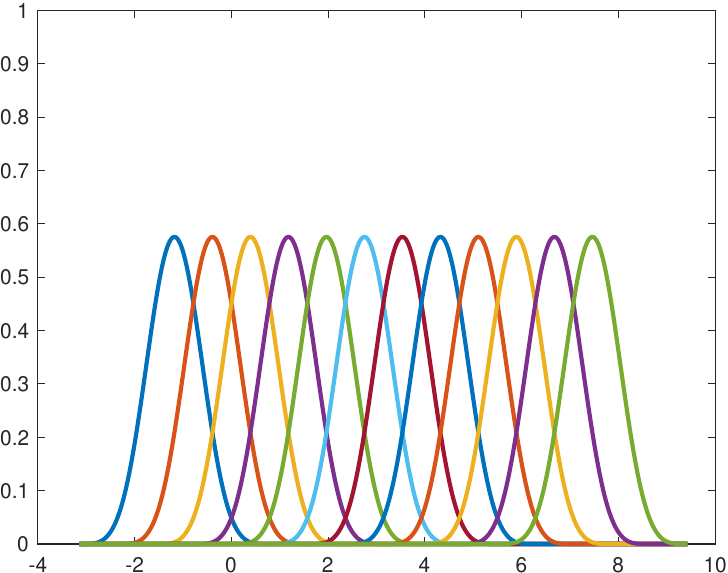}} \\
\subfigure[Control points and B-spline basis for $m=7$, $p=8$]
{\includegraphics[height=5cm]{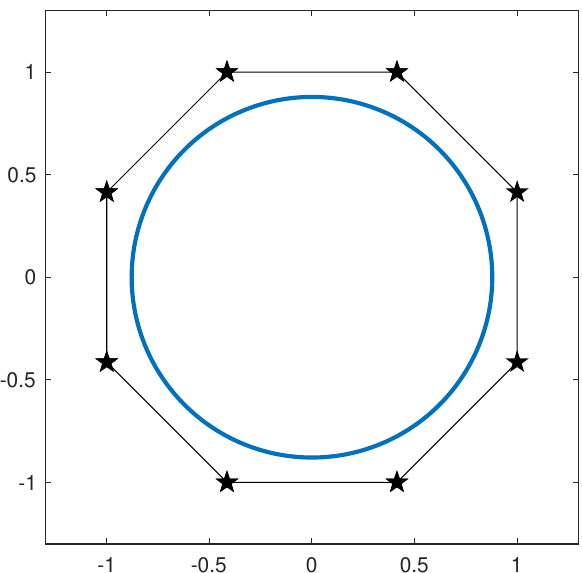} \hspace*{0.8cm}
 \includegraphics[height=5cm]{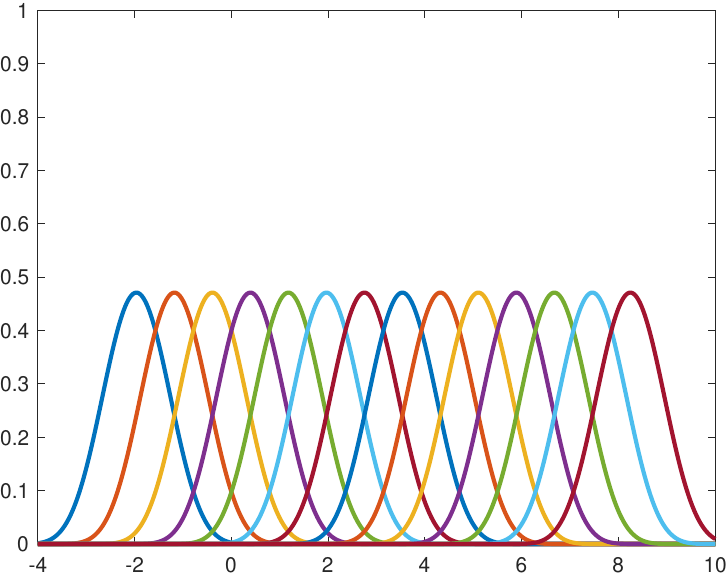}}
\caption{Representation of a full circle by means of a normalized trigonometric B-spline curve of order $m=5,7$ defined on the knot sequence \eqref{eq:ex-circle-knots-m} for $p=8$ as described in Example~\ref{ex:circle-m} ($\theta=\pi/p$). The corresponding control points are indicated with the symbol~$\star$ and form a regular $p$-sided polygon.}\label{fig:circle-m}
\end{figure}

Next, we illustrate a description of a circle segment that is obtained from the previous uniform representations of the full circle by inserting knots at the endpoints of a certain interval in the parameter domain.
\begin{example}\label{ex:circle-part-m}
Let $m=2n+1\geq3$ and consider the representation of a full circle elaborated in Example~\ref{ex:circle-m} over the parametric domain $[0,2\pi)$.
To obtain a circle segment, we restrict the parametric domain to the interval $[2a\pi/p,2b\pi/p)$ for some integers $0 \leq a<b \leq p$ and take the open knot sequence
\begin{equation}\label{eq:ex-circle-part-knots-m}
\biggl\{\frac{2k\pi}{p}:k=\underbrace{a,\ldots,a}_{m \text{ times}},a+1,\ldots,b-1,\underbrace{b,\ldots,b}_{m \text{ times}} \biggr\}.
\end{equation}
The corresponding representation in terms of normalized trigonometric B-splines of order $m=5,7$ is illustrated in Figure~\ref{fig:circle-part-m}, where $p=8$, $a=3-n$, and $b=p+1-n$ (three quarters of a circle). The same parameterization is maintained as in Example~\ref{ex:circle-m} (Figure~\ref{fig:circle-m}), which can be computed by means of knot insertion \cite{KochLNS:1995}. The expressions of the control points are reported in Figure~\ref{tab:circle-part-m}.
\end{example}

\begin{figure}[t!]
\centering
\subfigure[Control points and B-spline basis for $m=5$, $p=8$]
{\includegraphics[height=5cm]{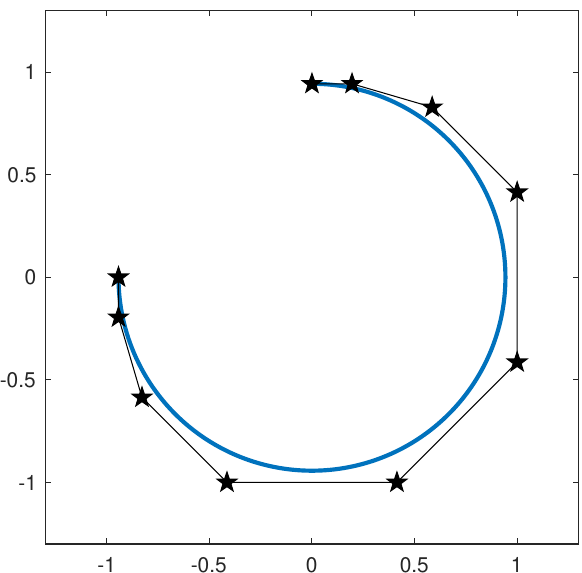} \hspace*{0.8cm}
 \includegraphics[height=5cm]{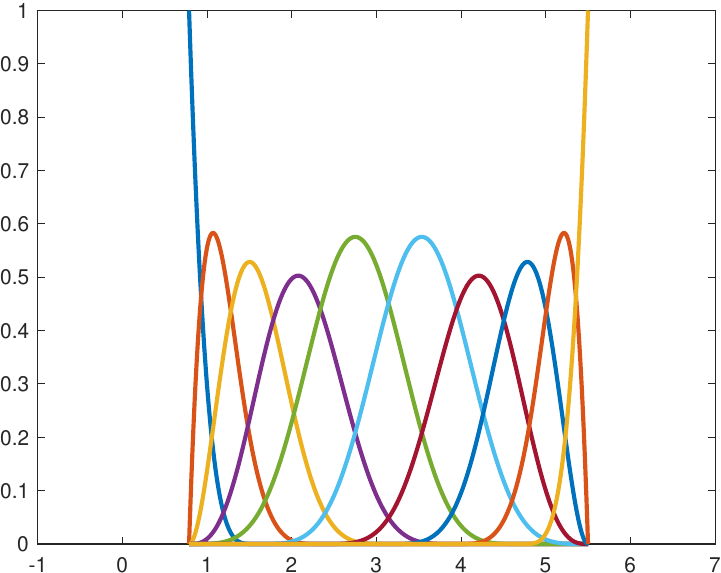}} \\
\subfigure[Control points and B-spline basis for $m=7$, $p=8$]
{\includegraphics[height=5cm]{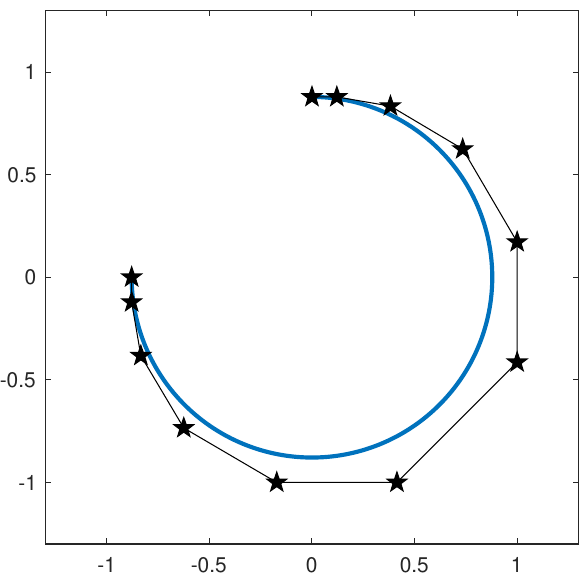} \hspace*{0.8cm}
 \includegraphics[height=5cm]{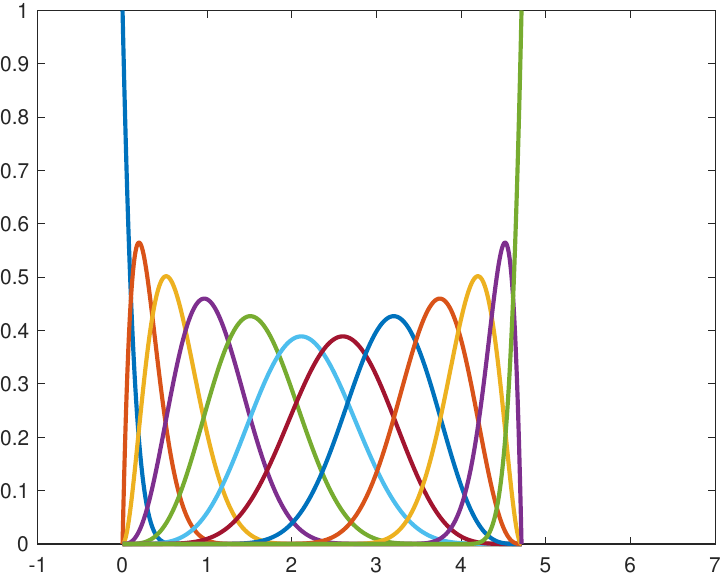}}
\caption{Representation of three quarters of a circle by means of a normalized trigonometric B-spline curve of order $m=5,7$ ($n=2,3$) defined on the knot sequence \eqref{eq:ex-circle-part-knots-m} for $p=8$, $a=3-n$, and $b=p+1-n$ as described in Example~\ref{ex:circle-part-m} ($\theta=\pi/p$). The corresponding control points are indicated with the symbol~$\star$.}\label{fig:circle-part-m}
\end{figure}

\begin{figure}[t!]
\makegapedcells
\begin{tabularx}{\textwidth}{|>{\centering\arraybackslash}X|}
\hline
$m=5$ \\
\hline
$\footnotesize
\begin{gathered}
\boldsymbol{P}_1 = \left( -\frac{2\sqrt{2}}{3},\, 0 \right)^T, \quad
\boldsymbol{P}_2 = \left( -\frac{2\sqrt{2}}{3},\, -\frac{2}{3} + \frac{\sqrt{2}}{3} \right)^T, \quad
\boldsymbol{P}_3 = \left( 2 - 2\sqrt{2},\, -2 + \sqrt{2} \right)^T, \\
\boldsymbol{P}_4 = \left( 1 -\sqrt{2},\, -1 \right)^T, \quad
\boldsymbol{P}_5 = \left( -1 + \sqrt{2},\, -1 \right)^T, \quad
\boldsymbol{P}_6 = \left( 1,\, 1 - \sqrt{2} \right)^T, \quad
\boldsymbol{P}_7 = \left( 1,\, -1 + \sqrt{2} \right)^T, \\
\boldsymbol{P}_8 = \left( 2 - \sqrt{2},\, -2 + 2\sqrt{2} \right)^T, \quad
\boldsymbol{P}_9 = \left( \frac{2}{3} - \frac{\sqrt{2}}{3},\, \frac{2\sqrt{2}}{3} \right)^T, \quad
\boldsymbol{P}_{10} = \left( 0,\, \frac{2\sqrt{2}}{3} \right)^T
\end{gathered}
$ \\
\hline
\end{tabularx}

\smallskip

\begin{tabularx}{\textwidth}{|>{\centering\arraybackslash}X|}
\hline
$m=7$ \\
\hline
$\footnotesize
\begin{gathered}
\boldsymbol{P}_1 = \left( -3 + \frac{3\sqrt{2}}{2},\, 0 \right)^T, \quad
\boldsymbol{P}_2 = \left( -3 + \frac{3\sqrt{2}}{2},\, 2 - \frac{3\sqrt{2}}{2} \right)^T, \quad
\boldsymbol{P}_3 = \left( -\frac{32}{7} + \frac{37\sqrt{2}}{14},\, \frac{15}{7} - \frac{25\sqrt{2}}{14} \right)^T, \\
\boldsymbol{P}_4 = \left( -\frac{27}{7} + \frac{16\sqrt{2}}{7},\, \frac{9}{7} - \frac{10\sqrt{2}}{7} \right)^T, \quad
\boldsymbol{P}_5 = \left( -3 + 2\sqrt{2},\, -1 \right)^T, \quad
\boldsymbol{P}_6 = \left( -1 + \sqrt{2},\, -1 \right)^T, \\
\boldsymbol{P}_7 = \left( 1,\, 1 - \sqrt{2} \right)^T, \quad
\boldsymbol{P}_8 = \left( 1,\, 3 - 2\sqrt{2} \right)^T, \quad
\boldsymbol{P}_9 = \left( -\frac{9}{7} + \frac{10\sqrt{2}}{7},\, \frac{27}{7} - \frac{16\sqrt{2}}{7} \right)^T, \\
\boldsymbol{P}_{10} = \left( -\frac{15}{7} + \frac{25\sqrt{2}}{14},\, \frac{32}{7} - \frac{37\sqrt{2}}{14} \right)^T, \quad
\boldsymbol{P}_{11} = \left( -2 + \frac{3\sqrt{2}}{2},\, 3 - \frac{3\sqrt{2}}{2} \right)^T, \quad
\boldsymbol{P}_{12} = \left( 0,\, 3 - \frac{3\sqrt{2}}{2} \right)^T
\end{gathered}
$ \\
\hline
\end{tabularx}
\caption{Expressions of the control points used in Figure~\ref{fig:circle-part-m} for the representation of three quarters of a circle by means of a normalized trigonometric B-spline curve of order $m=5,7$ ($n=2,3$) defined on the knot sequence \eqref{eq:ex-circle-part-knots-m} for $p=8$, $a=3-n$, and $b=p+1-n$ as described in Example~\ref{ex:circle-part-m} ($\theta=\pi/p$).}\label{tab:circle-part-m}
\end{figure}

We end this section by evincing that working with high-order B-splines of maximal smoothness is beneficial for approximation purposes and in particular for isogeometric analysis (where both design and approximation is of interest).
\begin{example}\label{ex:approx}
We want to build a spline approximation of the oscillatory target function
\begin{equation}\label{eq:ex-approx-fun}
f(x)=\sin(10x)\, \frac{4(x/5-1)^2+1}{5}, \quad x\in[0,10].
\end{equation}
This function is visualized in Figure~\ref{fig:ex-approx-fun}.
Let $m=2n+1\geq3$ and consider a knot sequence of the form
\begin{equation}\label{eq:ex-approx-knots}
\biggl\{\frac{k}{p}:k=\underbrace{0,\ldots,0}_{m \text{ times}},1,\ldots,10p-1,\underbrace{10p,\ldots,10p}_{m \text{ times}} \biggr\},
\end{equation}
where $p=2^{l+1}$ and $l=1,\ldots,5$. Then, we look for a trigonometric and hyperbolic B-spline representation of order $m$ on this knot sequence. The number of degrees of freedom (NDOF) is $10p+m-1$. The representation is obtained by applying the least-squares method to approximate $f$ sampled at $10001$ equally distributed points in $[0,10]$. The corresponding errors measured in the $L_\infty$ norm are plotted against the NDOF in Figure~\ref{fig:ex-approx-error}. The approximation error is expected to behave like $\mathcal{O}(p^{-m})$ and hence the convergence rate is faster for higher order, both in the trigonometric and hyperbolic case. From Figure~\ref{fig:ex-approx-error} we may conclude that splines of higher order (and maximal smoothness) possess a higher accuracy per degree of freedom.
\end{example}

\begin{figure}[t!]
\centering
\includegraphics[height=5cm]{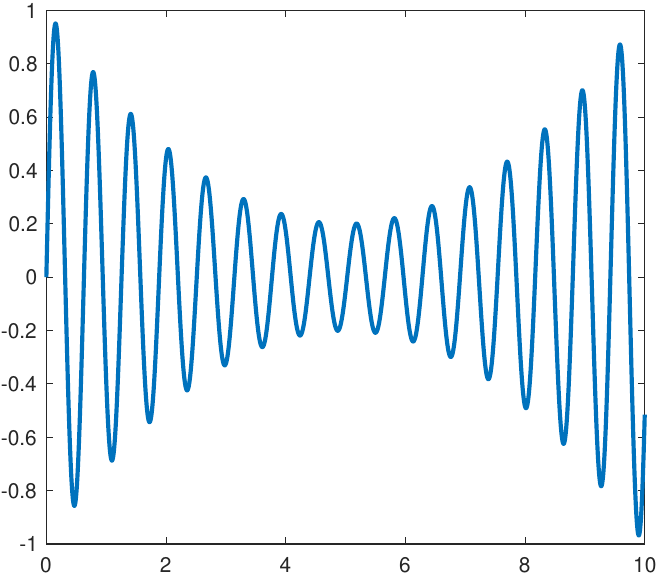}
\caption{Target function $f$ defined in \eqref{eq:ex-approx-fun}. }\label{fig:ex-approx-fun}
\bigskip
\centering
\subfigure[Trigonometric spline errors]
{\includegraphics[height=5cm]{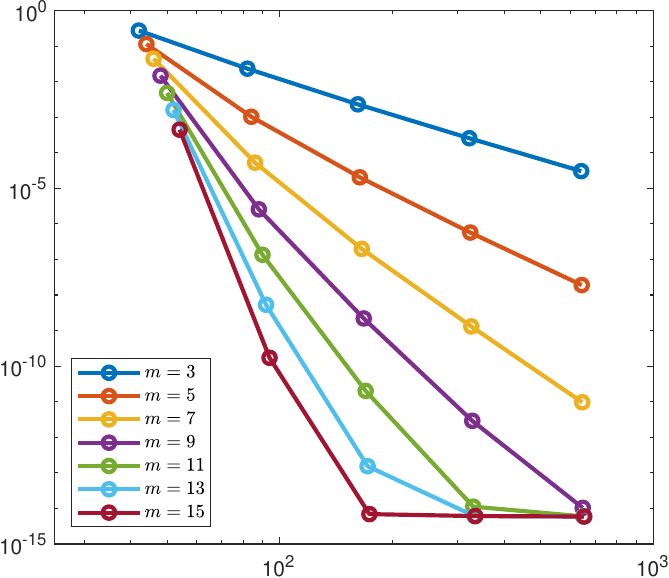}} \hspace*{0.8cm}
\subfigure[Hyperbolic spline errors]
{\includegraphics[height=5cm]{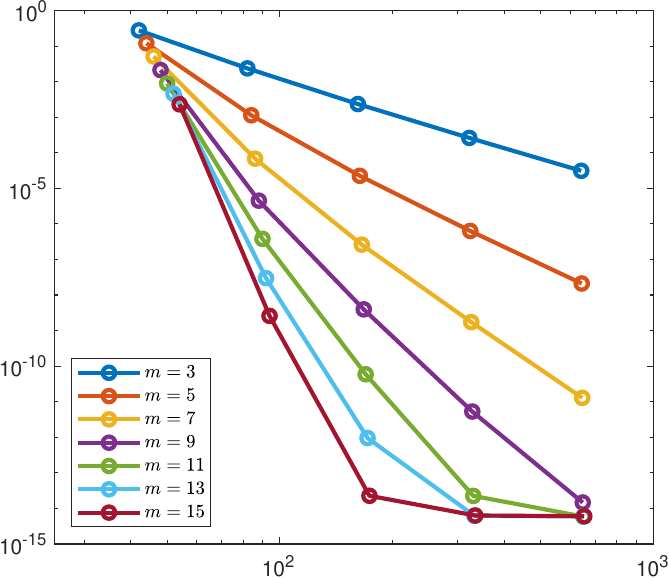}}
\caption{Convergence of the $L_{\infty}$ error with respect to the NDOF of least-squares approximations in the trigonometric and hyperbolic spline spaces of order $m=2n+1$, $n=1,\ldots,7$ defined on the knot sequence \eqref{eq:ex-approx-knots} for $p=2^{l+1}$, $l=1,\ldots,5$, in logarithmic scale, as described in Example~\ref{ex:approx}. }\label{fig:ex-approx-error}
\end{figure}

\section{Concluding remarks}\label{sec:conclusion}

Trigonometric and hyperbolic splines are important variations of classical polynomial splines because they are well suited for building high-quality parametric descriptions of conic sections and cam profiles. Moreover, they can be represented in terms of B-spline bases, which can be computed efficiently and accurately via recurrence relations analogous to the classical polynomial B-spline basis; see \eqref{eq:recursion-trig} and~\eqref{eq:recursion-hypb}.

However, in their original formulation, these two types of B-splines do not form a partition of unity and consequently do not admit the notion of control polygons with the convex hull property for design purposes. In the literature one can find the explicit formulas \eqref{eq:B-spline-trig-weights-full} and \eqref{eq:B-spline-hypb-weights-full} for the associated normalization weights, but they are computationally expensive; the growth of $|\mathcal{Q}_n|$ is depicted in Figure~\ref{fig:card-set}.

We have derived the new explicit expressions \eqref{eq:B-spline-trig-weights} and \eqref{eq:B-spline-hypb-weights} for such weights, requiring less arithmetic operations as indicated by the growth of $|\mathcal{S}_n|$ in Figure~\ref{fig:card-set}. We have also provided a practical method to compute them recursively; see Algorithm~\ref{alg:weights}. In the special case of uniform knots, we have further simplified the expressions into \eqref{eq:B-spline-trig-weights-uniform} and \eqref{eq:B-spline-hypb-weights-uniform}, which rely on $q$-binomial coefficients.

These weights can then be used in the construction of normalized trigonometric/hyperbolic B-splines. As example application, we have considered the exact representation of a circle in terms of $C^{2n-1}$ trigonometric B-splines of order $m=2n+1\geq3$, with a variable number of control points; see Examples~\ref{ex:circle} to~\ref{ex:circle-part-m}.
We have also illustrated the approximation power of trigonometric and hyperbolic splines. High-order splines (of maximal smoothness) exhibit an excellent accuracy per degree of freedom; see Example~\ref{ex:approx}.

It might be opportune to incorporate the recurrence formula \eqref{eq:recursion-normalized} into the Matlab toolbox for MDTB-splines in \cite{Speleers:2022} to achieve a more stable computation in the case of trigonometric/hyperbolic B-splines (of uniform degree).

\section*{Acknowledgements}

This work has been partially supported by the MUR Excellence Department Project MatMod@TOV (CUP E83C23000330006) awarded to the Department of Mathematics of the University of Rome Tor Vergata
and by the Italian Research Center on High Performance Computing, Big Data and Quantum Computing (CUP E83C22003230001).
H.~Speleers is a member of the research group GNCS (Gruppo Nazionale per il Calcolo Scientifico) of INdAM (Istituto Nazionale di Alta Matematica).

\appendix
\section{Proof of the identity \eqref{eq:B-spline-trig-weights-int}.}
\label{sec:B-spline-trig-weights-int}

In this appendix, we prove the identity \eqref{eq:B-spline-trig-weights-int}. The starting point is the trigonometric analog of Marsden's identity derived in \cite{LycheW:1979}.
Let $m=2n+1\geq3$, $m\leq l\leq r$, and $x_l<x_{r+1}$. It holds for any $x\in[x_l,x_{r+1})$ and $y\in\mathbb{R}$,
\begin{equation}\label{eq:B-spline-trig-marsden}
\left[\sin\left(\dfrac{y-x}{2}\right)\right]^{2n}=\sum_{j=l+1-m}^{r}\phi_{j,m}(y)\sin\left(\dfrac{x_{j+m}-x_j}{2}\right) T_{j,m}(x),
\end{equation}
where
$$
\phi_{j,m}(y)=\prod_{k=1}^{2n}\sin\left(\dfrac{y-x_{j+k}}{2}\right).
$$
Let $\mathcal{L}_y(f)$ be the operator that selects the constant term in the trigonometric polynomial expansion of the function $f$ in the variable $y$.
We now compare the constant terms in such expansions of both sides of the identity \eqref{eq:B-spline-trig-marsden}. For the left-hand side, this term is known in explicit form (see, e.g., \cite{Walz:1997}),
$$
\mathcal{L}_y\left(\left[\sin\left(\dfrac{y-x}{2}\right)\right]^{2n}\right)=\frac{1}{2^{2n}}\binom{2n}{n}=\frac{|\mathcal{S}_n|}{2^{2n-1}}.
$$
For the right-hand side, we utilize the standard integral formula to compute the Fourier coefficient related to the constant term in the expansion of $\phi_{j,m}$,
$$
\mathcal{L}_y\left(\phi_{j,m}(y)\right)=\frac{1}{2\pi}\int_{0}^{2\pi}\prod_{k=1}^{2n}\sin\left(\dfrac{y-x_{j+k}}{2}\right)\textrm{d}y.
$$
Combining all these identities gives
$$
1=\sum_{j=l+1-m}^{r}\trig{w}_{j,m}\sin\left(\dfrac{x_{j+m}-x_j}{2}\right) T_{j,m}(x),
$$
where
$\trig{w}_{j,m}$ is given by \eqref{eq:B-spline-trig-weights-int}.

\bibliographystyle{plain}

\end{document}